\newtheorem{theorem}{Theorem}[section]
\newtheorem{lemma}[theorem]{Lemma}
\newtheorem{remark}[theorem]{Remark}
\newtheorem{corollary}[theorem]{Corollary}
\begin{document}

\title{The $n$-Queens Problem in Higher Dimensions}
\author{\begin{tabular}[t]{c@{\extracolsep{2em}}c}
    Jeremiah Barr & Shrisha Rao\footnote{Corresponding author} \\
    Dept of Computer Science & IIIT-Bangalore\\
    Mount Mercy College & 26/C, Electronics City\\
    Cedar Rapids, IA 52402 & Bangalore 560 100\\
    U.~S.~A. & India\\
    jrb62307@att.net & srao@iiitb.ac.in
\end{tabular}}
\maketitle

\begin{abstract}
A well-known chessboard problem is that of placing eight queens on
the chessboard so that no two queens are able to attack each
other. (Recall that a queen can attack anything on the same row,
column, or diagonal as itself.)  This problem is known to have
been studied by Gauss, and can be generalized to an \(n \times n\)
board, where \(n \geq 4\).  We consider this problem in
$d$-dimensional chess spaces, where \(d \geq 3\), and obtain the
result that in higher dimensions, $n$ queens do not always suffice
(in any arrangement) to attack all board positions.  Our methods
allow us to obtain the first lower bound on the number of queens
that are necessary to attack all positions in a $d$-dimensional
chess space of size $n$, and further to show that for any $k$,
there are higher-dimensional chess spaces in which not all
positions can be attacked by \(n^k\) queens.

\end{abstract}

\section{Introduction} \label{intro}

The 8-queens problem is a well-known chessboard problem, whose
constraints are to place eight queens on a normal chessboard in such a
way that no two attack each other, under the rule that a chess queen
can attack other pieces in the same column, row, or diagonal.  This
problem can be generalized to place $n$ queens on an $n$ by $n$
chessboard, otherwise known as the $n$-queens problem.  The
mathematicians Gauss and Polya studied this problem~\cite{3}, and
Ahrens~\cite{1} showed that for all \(n \geq 4\), solutions
exist. This problem can be further generalized to $d$-dimensions,
where two queens attack one another if they lie on a common
hyperplane.  It can then be described as the $n$-queens problem in
$d$-dimensions.  (The traditional 8-queens problem, as described
above, is 2-dimensional.)

The $n$-queens problem is classically considered a theoretical
one, but has also been studied~\cite{5} for its many applications:
in distributed memory storage schemes~\cite{4}, VLSI
testing~\cite{7}, deadlock prevention in computer
systems~\cite{6}, and others.  As a canonical problem in
constraint satisfaction, the problem is also approached using
neural networks~\cite{8}, and also studied as a standard candidate
for the backtracking (depth-first search) method.

It is trivial that \(\forall n \geq 4, d \geq 3\), there always is
a way of placing $n$ queens in a $d$-dimensional board of size $n$
so that no two attack each other.  This follows from the result
\cite{1} that for all \(n \geq 4\), $n$ queens can be placed on a
regular 2-dimensional board.

We previously performed a computational analysis~\cite{2} of the
$n$-queens problem in higher dimensions, by counting the number of
ways in which $n$ queens can be placed.  This analysis seemed to
indicate that there were non-attacking queens solutions in $d$
dimensions that could not be projected onto subspaces, and led us
to investigate the following question:

\begin{itemize}

\item[{\sc Least}] What is the least number of queens that would
be necessary to attack every position in a $d$-dimensional board
of size $n$?

\end{itemize}

Certain simple cases are easy to analyse.  For instance, if \(n =
3\), the least number of queens that can attack all board
positions on a $d$-dimensional board is 1, \(\forall d \geq 1\).
Similarly, when $d$ is 2, i.e., on the regular board of size $n$,
we obviously need no more than $n$ queens, and in fact, fewer
suffice.  Our present work gives the first general lower bound on
the number of queens needed, for all $n$ and $d$.

\section{The Queens Problem} \label{nqueens}

\subsection{In Two Dimensions}\label{twodim}

On a 2-dimensional grid, a queen can attack along the two axes $X$
and $Y$, and along two diagonals.  For a queen located at
Cartesian coordinates \(\langle q_1, q_2\rangle\), the axes are
given if \(x = q_1\) and \(y = q_2\).  The diagonals are \(x - q_1
= y - q_2\) and \(x - q_1 = q_2 - y\), and these are the 1- and
2-dimensional attack lines respectively.

\subsection{In $d$ Dimensions}\label{ddim}

Consider a $d$-dimensional hypercube.  The vertices of such a
hypercube can be addressed by bit strings of length $d$---the
$2^d$ vertices of the hypercube can be addressed by \(000\ldots0\)
through \(111\ldots1\).

Two vertices in a $d$-dimensional hypercube are adjacent if their
addresses differ in just one bit position.  If they differ in \(k
\leq d\) positions, they lie on a $k$-dimensional diagonal.
Therefore, the longest-length diagonal connects a vertex with its
polar reciprocal (the opposite vertex, whose address bits are all
inverses of the corresponding bits).

A chess board of size $n$ is similar to a hypercube, except that
instead of just two values, each position in the string takes $n$
values, from 0 through \(n - 1\).

A queen in a $d$-dimensional chess space therefore has a position
given by a $d$-dimensional vector \(\langle q_1, q_2, \ldots,
q_d\rangle\), where \(0 \leq q_k \leq n-1\).

The equations for the $d$-dimensional attack lines for a queen at
location \(\langle q_1, q_2, \ldots, q_d\rangle\) in a
$d$-dimensional chess hyperspace are of the form
\[\pm (x_1 - q_1) = \pm (x_2 - q_2) = \ldots = \pm (x_d - q_d).\]

There are obviously \(2^d\) such equations in all.  However,
noting that changing the signs on all the terms gives us a new
equation with the same meaning as the one changed, there are
\(2^{d-1}\) equations that are distinct.

\begin{lemma} \label{queenvectorslemma}

A queen in $d$-space can also be considered as having attack
vectors of the form \(\langle \delta_1, \delta_2, \ldots, \delta_d
\rangle\), \(\delta_k \in \{-1, 0, 1\}\), with the constraint that
\(\{\delta_1, \delta_2, \ldots, \delta_d\} \neq \{0\}\).

\end{lemma}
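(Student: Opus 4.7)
The plan is to show that every attack line through $\langle q_1, \ldots, q_d\rangle$ admits a parametric description $x_k = q_k + t\delta_k$ with $\delta_k \in \{-1,0,1\}$, and conversely that each such nonzero vector traces out an attack line. This gives the desired bijective reformulation of the attack equations in terms of direction vectors.

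First I would rewrite each of the $2^{d-1}$ distinct full-dimensional diagonal equations $\pm(x_1 - q_1) = \pm(x_2 - q_2) = \cdots = \pm(x_d - q_d)$ by selecting a sign $\epsilon_k \in \{-1,+1\}$ for each term and letting the common value equal a scalar parameter $t$. This immediately yields $x_k = q_k + \epsilon_k t$, so the direction vector is $\langle \epsilon_1, \ldots, \epsilon_d\rangle$, whose entries all lie in $\{-1,+1\} \subset \{-1,0,1\}$. The identification of $\delta$ with $-\delta$ (same line, opposite traversal) matches the already noted identification in the sign conventions.

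Next I would account for the lower-dimensional attack lines that the listed equations do not directly cover, namely the axis rows and columns, the planar diagonals, and more generally the $k$-dimensional diagonals for $k < d$. Each such line sits inside a coordinate-aligned subspace on which certain coordinates are held fixed at $q_k$; I would set $\delta_k = 0$ on precisely those coordinates, while the remaining coordinates reduce to the preceding case with $\delta_k = \pm 1$. The excluded direction $\langle 0, \ldots, 0\rangle$ corresponds to no line at all, which is exactly the constraint $\{\delta_1, \ldots, \delta_d\} \neq \{0\}$ stated in the lemma.

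The one step requiring explicit care is the justification that every nonzero $\delta_k$ may be taken in $\{-1,+1\}$, rather than a larger integer such as $\pm 2$. This is the main potential obstacle, but it follows from the chess-queen semantics: along any attack line the queen moves one board-square at a time, so consecutive attacked lattice points differ by exactly $\pm 1$ in each active coordinate. Hence the primitive step vector along any attack line already has entries confined to $\{-1,0,1\}$, and no rescaling is necessary.
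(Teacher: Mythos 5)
Your proof is correct and follows essentially the same route as the paper's: both encode the queen's motion in each coordinate as $+1$, $-1$, or $0$ and exclude the all-zero vector on the grounds that it represents no movement at all. Your version is somewhat more careful than the paper's --- you explicitly parametrize the attack-line equations by a scalar $t$ and account for the lower-dimensional diagonals via zero components, whereas the paper states the same interpretation informally in two sentences.
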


\begin{proof}

A component $\delta_k$ of an attack vector represents a queen's
movement in dimension $k$ along one attack line.  Such movement
can be in two directions (``forward'' or ``backward,'' so to
speak) which we can represent as $+1$ or $-1$; if there is no
movement in dimension $k$, then \(\delta_k = 0\).

It is not possible for all components to be zero, i.e.,
\(\{\delta_1, \delta_2, \ldots, \delta_d\} \neq \{0\}\), because
an attack vector must have the queen moving along at least one
dimension---a zero vector denotes complete lack of movement from 
the queen's current position.
\end{proof}

Each attack line for a queen is composed of the maximum range of
the queen's movement along two attack vectors (one in each
direction from the queen's location) such that their sum results
in the zero vector \(\langle 0_1, 0_2, \ldots, 0_d \rangle\).
Hence, the number of attack lines is half the number of attack
vectors.

\begin{remark} \label{remarkattacklines}
The board positions at a scalar distance \(s > 0\) along a queen's
attack vector are given by:

\[ s \langle \delta_1, \delta_2, \ldots, \delta_d \rangle + \langle q_1, q_2,
\ldots, q_d \rangle. \]
\end{remark}

\begin{theorem} \label{theoremattacklines}

A queen in $d$-space has a total of \[\frac{3^d - 1}{2}\] attack
lines.

\end{theorem}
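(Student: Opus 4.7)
The plan is to use Lemma~\ref{queenvectorslemma} to enumerate attack vectors and then pair them up into attack lines, using the pairing observation already made in the paragraph following the lemma.

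First I would count the attack vectors directly from Lemma~\ref{queenvectorslemma}. Since an attack vector $\langle \delta_1, \delta_2, \ldots, \delta_d\rangle$ has each $\delta_k$ chosen independently from the three-element set $\{-1, 0, 1\}$, there are $3^d$ candidate vectors. The constraint $\{\delta_1, \ldots, \delta_d\} \neq \{0\}$ removes exactly one candidate (the all-zero vector), leaving $3^d - 1$ distinct attack vectors.

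Next I would invoke the paragraph preceding the theorem, which establishes that each attack line through the queen is traced out by exactly two attack vectors: a vector $\langle \delta_1,\ldots,\delta_d\rangle$ and its negation $\langle -\delta_1,\ldots,-\delta_d\rangle$, whose sum is the zero vector. I should briefly justify that this pairing is a genuine fixed-point-free involution on the set of attack vectors: the map $v \mapsto -v$ is its own inverse, and it has no fixed point because $v = -v$ would force $v$ to be the zero vector, which is excluded. Consequently the $3^d - 1$ attack vectors partition into $(3^d - 1)/2$ unordered antipodal pairs, each pair corresponding to one attack line, giving the claimed count.

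I do not anticipate a significant obstacle here: the theorem is essentially a bookkeeping consequence of the lemma combined with the pairing remark. The only subtlety worth stating explicitly is the well-definedness of the involution $v \mapsto -v$ on the set of nonzero $\{-1,0,1\}$-vectors, which guarantees that $3^d - 1$ is even and the division by two yields an integer. Writing out the proof should take only a few lines.
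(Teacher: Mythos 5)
Your proposal is correct and follows essentially the same route as the paper: count the $3^d - 1$ nonzero $\{-1,0,1\}$-vectors and halve, using the antipodal pairing of attack vectors into attack lines stated in the paragraph before the theorem. Your explicit check that $v \mapsto -v$ is a fixed-point-free involution (hence $3^d-1$ is even) is a small rigor bonus the paper leaves implicit, but it is not a different argument.
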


\begin{proof}

We know from Lemma~\ref{queenvectorslemma} that each coordinate of
a queen's attack vector can have one of 3 values.  An attack
vector also has $d$ components (as it is a vector in $d$-space).
However, the zero vector where the queen does not move in any
direction is ruled out, as pointed out in the lemma.

Therefore, the queen has \(3^d - 1\) attack vectors.  Since the
number of attack lines is half the number of attack vectors, the
number of attack lines is \(\frac{3^d - 1}{2}\). \end{proof}

\subsection{{\sc Least}}

For the question {\sc Least} stated in Section~\ref{intro}, we first
consider the following.

\begin{lemma} \label{leastlemma}

A queen in any $d$-dimensional chess space of size $n$ can attack
at most $n$ board positions (including the one it holds) along any
one attack line.

\end{lemma}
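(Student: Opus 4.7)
The plan is to reduce this to a one-dimensional counting argument by projecting onto a suitably chosen coordinate axis. By Lemma~\ref{queenvectorslemma}, the attack vector $\langle \delta_1, \delta_2, \ldots, \delta_d\rangle$ associated with the line has at least one nonzero component, so I would fix an index $k$ with $\delta_k \in \{-1, +1\}$.

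Next I would parametrize the attack line by the scalar $s$ as in Remark~\ref{remarkattacklines}, noting that every board position on the full line (in both directions from the queen) has the form $\langle q_1 + s\delta_1, \ldots, q_d + s\delta_d\rangle$ for some integer $s$, where the queen itself corresponds to $s = 0$ and the two attack-vector directions are captured by positive and negative $s$ (equivalently by using the vector and its negation). Now look only at the $k$-th coordinate: it takes the value $q_k + s\delta_k$, and since $\delta_k = \pm 1$, the map $s \mapsto q_k + s\delta_k$ is an injection from $\mathbb{Z}$ to $\mathbb{Z}$.

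The key observation is then that this $k$-th coordinate must lie in the legal range $\{0, 1, \ldots, n-1\}$ for the position to be on the board, and that range contains exactly $n$ integers. Since distinct values of $s$ give distinct $k$-th coordinates, there are at most $n$ values of $s$ producing a legal $k$-th coordinate, and hence at most $n$ legal board positions on the entire attack line, including the queen's own position at $s = 0$.

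There is no real obstacle here; the only delicate point is being precise that an attack \emph{line} (as opposed to a single attack vector) covers both $s \geq 0$ and $s < 0$, so the bound must be stated over all integer $s$ rather than just nonnegative $s$. Once that is clarified, the injectivity in the distinguished coordinate $k$ immediately yields the bound of $n$.
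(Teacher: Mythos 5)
Your proposal is correct and follows essentially the same route as the paper's proof: project onto a coordinate $k$ with $\delta_k \neq 0$, observe that $q_k + s\delta_k$ must stay in $\{0,\ldots,n-1\}$, and conclude that $s$ admits at most $n$ values. If anything, your version is slightly more careful than the paper's, which parametrizes only by $s > 0$ along a single attack vector and glosses over the fact that an attack line comprises both directions; your explicit treatment of negative $s$ and the injectivity of $s \mapsto q_k + s\delta_k$ tightens that point without changing the argument.
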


\begin{proof}

By Remark~\ref{remarkattacklines}, we can find a board position
attacked by the queen along an attack vector by taking the sum of
queen's position and the product of the attack vector as given in
Lemma~\ref{queenvectorslemma} and a scalar distance \(s > 0\) from the
queen's position.

The individual $k$-dimensional coordinate of a board position
attacked by a queen is given by \(s \delta_k + q_k\).  We know
that the chess space itself does not have any position with a
coordinate greater than \(n-1\) or less than \(0\).  Therefore,
any attack vector terminates when \(s \delta_k + q_k = 0\) or \(s
\delta_k + q_k = n - 1\), for any coordinate $k$.  Therefore, $s$
can take at most $n$ values since at least one of the $\delta_k$
must be non-zero and it can only have $n$ values along that
coordinate. \end{proof}

From this and Theorem~\ref{theoremattacklines}, we get the
following.

\begin{lemma}

A queen in a $d$-dimensional chess space of size $n$ can attack no
more than \(\frac{n(3^d - 1)}{2}\) board positions.

\end{lemma}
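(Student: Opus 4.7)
The plan is to obtain the bound as an immediate product of the two results just established. By Theorem~\ref{theoremattacklines}, a queen in $d$-space has exactly $(3^d-1)/2$ distinct attack lines. By Lemma~\ref{leastlemma}, along any single attack line the queen can attack at most $n$ board positions (counting its own square). Multiplying these two quantities gives an upper bound of $n(3^d-1)/2$ on the total number of squares the queen reaches, which is precisely the stated bound.

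Concretely, I would first fix an arbitrary queen at position $\langle q_1,\dots,q_d\rangle$ and let $\mathcal{L}$ denote the set of its attack lines. For each $L \in \mathcal{L}$, let $A_L$ be the set of board positions on $L$ reachable by the queen; Lemma~\ref{leastlemma} gives $|A_L| \leq n$. The set of positions attacked by the queen is contained in $\bigcup_{L \in \mathcal{L}} A_L$, so
\[
\Bigl|\bigcup_{L \in \mathcal{L}} A_L\Bigr| \;\leq\; \sum_{L \in \mathcal{L}} |A_L| \;\leq\; |\mathcal{L}|\cdot n \;=\; \frac{n(3^d-1)}{2},
\]
which is exactly the claim.

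There is essentially no hard step here; the only thing worth flagging is that the inequality is genuinely an upper bound rather than an equality. The union bound loses the fact that the queen's own square lies on every attack line (so it is overcounted $(3^d-1)/2$ times), and that near the boundary of the board some attack lines are shorter than $n$. Since the lemma is phrased as ``no more than,'' these slacks do not need to be tracked; the multiplicative estimate suffices, and the proof reduces to citing the two earlier results in sequence.
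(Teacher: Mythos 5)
Your proposal is correct and matches the paper exactly: the paper derives this lemma with no separate proof, simply combining Theorem~\ref{theoremattacklines} (the count of attack lines) with Lemma~\ref{leastlemma} (at most $n$ positions per line) by multiplication. Your union-bound formalization and the observation about overcounting the queen's own square are harmless elaborations of the same argument.
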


Note also that there are \(n^d\) board positions in a
$d$-dimensional chess space of size $n$.  Therefore, by dividing
the number of board positions by the maximum number of positions
attacked by a queen, we have the following.

\begin{theorem} \label{leastqueens}

The least number of queens necessary to attack all positions in a
$d$-dimensional chess space of size $n$ is no less than

\[ \frac{2 n^{d - 1}}{3^d - 1}. \]

\end{theorem}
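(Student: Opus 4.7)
The plan is a direct pigeonhole/union-bound argument that combines the two results immediately preceding the statement. The lemma just above the theorem bounds the number of board positions any single queen can reach (including its own square) by $\frac{n(3^d-1)}{2}$, and the board has $n^d$ positions, every one of which must be either occupied by or attacked by some queen in a covering arrangement.

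Concretely, I would let $Q$ denote the number of queens in any arrangement that covers all $n^d$ positions. Since the set of covered positions is the union over individual queens of their covered sets, a union bound gives
\[ n^d \;\leq\; \sum_{i=1}^{Q} (\text{positions covered by queen } i) \;\leq\; Q \cdot \frac{n(3^d - 1)}{2}. \]
Rearranging this inequality yields $Q \geq \frac{2 n^{d-1}}{3^d - 1}$, which is exactly the stated bound.

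Because the real work has already been done by Theorem~\ref{theoremattacklines} and Lemma~\ref{leastlemma}, I do not anticipate any substantive obstacle. The only points requiring minor care are bookkeeping ones: first, ``attack all positions'' must be read as requiring every square to be either occupied or attacked, so that a queen's own square contributes to its covered set — this is consistent with Lemma~\ref{leastlemma}, which explicitly counts the queen's own position among the at most $n$ squares per attack line. Second, the result is necessarily a lower bound rather than an equality, because a single position may lie on the attack lines of several queens and is therefore counted multiple times in the sum above; this slack weakens the bound but does not invalidate it, and in fact leaves room for the stronger bounds the paper goes on to develop.
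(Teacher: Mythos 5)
Your proposal is correct and matches the paper's own argument, which likewise divides the total of $n^d$ board positions by the per-queen bound $\frac{n(3^d-1)}{2}$ from the preceding lemma (the paper states this division in prose rather than as a formal union bound). Your added remarks about overcounting and about the queen's own square being included are consistent with the paper's own caveat that the bound is an overestimate subject to refinement.
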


In deriving this result, we have assumed that every queen is able
to attack as many board positions as possible along every attack
line, and that no two queens attack the same board position.  This
is obviously an overestimate, so the above expression is a lower
bound subject to refinement using more intricate analyses.

No more than $n$ non-attacking queens can be placed on a
two-dimensional board (since there are only $n$ rows or columns,
and every queen must be on a separate row and column).  However,
Theorem~\ref{leastqueens} shows us that this is not true in
higher-dimensional spaces.  Specifically, we have the following.

\begin{corollary}

When \(d \geq 3\), it is not always possible to attack all board
positions on a board of size \(n > 3\) using $n$ queens.

\end{corollary}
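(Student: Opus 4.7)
The plan is to derive this corollary as a direct numerical consequence of Theorem~\ref{leastqueens}. That theorem shows that any arrangement of queens covering every board position must contain at least $\frac{2n^{d-1}}{3^d - 1}$ queens. The strategy is simply to identify pairs $(n,d)$ with $d \geq 3$ and $n > 3$ for which this lower bound already exceeds $n$; in such cases $n$ queens provably cannot suffice, which is exactly what the phrase ``not always possible'' requires.

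The first step is an algebraic rearrangement. The inequality $\frac{2n^{d-1}}{3^d - 1} > n$ is equivalent to $2n^{d-2} > 3^d - 1$. For every fixed $d \geq 3$ the left-hand side grows polynomially in $n$ (of degree $d-2 \geq 1$), while the right-hand side is a constant depending only on $d$. Thus, for each such $d$ the inequality holds for all sufficiently large $n$. To make the corollary concrete I would exhibit a specific witness, for instance $d = 3$ and $n = 14$, where $2n = 28 > 26 = 3^3 - 1$, so that Theorem~\ref{leastqueens} forces more than $14$ queens on the $14 \times 14 \times 14$ board.

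The second step is to conclude the argument. Since any covering arrangement requires at least $\lceil 2n^{d-1}/(3^d-1) \rceil$ queens, and this quantity strictly exceeds $n$ at the witness just given, no placement of $n$ queens can attack every one of the $n^d$ board positions, demonstrating the existence of a configuration $(n,d)$ with $d \geq 3$ and $n > 3$ in which $n$ queens are insufficient.

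There is really no obstacle here beyond the interpretive point that ``not always possible'' is an existential rather than universal statement. I would therefore keep the proof to essentially one line, noting that the stronger claim ``for every $n > 3$ and every $d \geq 3$, $n$ queens are insufficient'' would \emph{not} follow from Theorem~\ref{leastqueens} alone, since for small $n$ relative to $d$ the bound $\frac{2n^{d-1}}{3^d-1}$ can fall below $n$; the corollary only needs one counterexample, and the preceding inequality produces infinitely many.
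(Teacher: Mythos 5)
Your proposal is correct and follows essentially the same route as the paper: both reduce the claim to the inequality $\frac{2n^{d-1}}{3^d-1} > n$, simplify it to $2n^{d-2} > 3^d - 1$, and observe that Theorem~\ref{leastqueens} then rules out coverage by $n$ queens. Your explicit witness ($d=3$, $n=14$, giving $28 > 26$) and the remark that the claim is existential rather than universal are useful additions the paper leaves implicit, but they do not change the argument.
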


\begin{proof}

Given Theorem~\ref{leastqueens}, we know that for any values $n$
and $d$ such that

\[ \frac{2 n^{d - 1}}{3^d - 1} > n, \]

which gives, upon simplification,

\[ 2 n^{d-2} > 3^d - 1, \]

it is impossible to attack all board positions using just $n$
queens. \end{proof}

Similarly, we can show that it is always possible to find an $n$
large enough that given a certain $d$, it is not possible to
attack all board positions using \(n^k\) queens, where \(k < d -
1\):

\begin{corollary}

If \(2 n^{d - k - 1} > 3^d - 1\), it is not possible to attack all
board positions using \(n^k\) queens.

\end{corollary}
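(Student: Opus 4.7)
The plan is to reduce this corollary directly to Theorem~\ref{leastqueens}, essentially mimicking the argument used for the previous corollary but with $n^k$ in place of $n$. By Theorem~\ref{leastqueens}, any set of queens that attacks every position of a $d$-dimensional board of size $n$ must contain at least $\frac{2n^{d-1}}{3^d-1}$ queens. So, to show that $n^k$ queens cannot suffice, it is enough to exhibit a condition on $n$ and $d$ that forces the lower bound to strictly exceed $n^k$.

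First I would write the inequality
\[
\frac{2n^{d-1}}{3^d-1} \;>\; n^{k},
\]
as the hypothesis under which $n^k$ queens are provably insufficient. Next I would clear the denominator (noting $3^d - 1 > 0$ for $d \geq 1$) and divide both sides by $n^k$ (noting $n > 0$), obtaining
\[
2 n^{d-k-1} \;>\; 3^d - 1,
\]
which is exactly the hypothesis of the corollary. Combining these two steps shows that whenever $2 n^{d-k-1} > 3^d - 1$ holds, the lower bound of Theorem~\ref{leastqueens} is strictly greater than $n^k$, so no arrangement of $n^k$ queens can attack every board position.

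There is no genuine obstacle: the argument is a one-line algebraic rearrangement of the bound already established in Theorem~\ref{leastqueens}. The only point worth mentioning, for motivation, is that the hypothesis is nonvacuous whenever $k < d - 1$, since then the left-hand side $2n^{d-k-1}$ grows as a positive power of $n$ while the right-hand side $3^d - 1$ is a constant depending only on $d$, so one can always choose $n$ sufficiently large. I would note this after the algebraic manipulation, so that the corollary is seen not merely as an implication but as an assertion that for each fixed $d$ and each $k < d - 1$, attacking the whole board with $n^k$ queens fails for all sufficiently large $n$.
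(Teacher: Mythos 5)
Your proposal is correct and follows exactly the paper's (implicit) argument: the paper proves this corollary the same way it proves the preceding one, by requiring the lower bound \(\frac{2n^{d-1}}{3^d-1}\) from Theorem~\ref{leastqueens} to exceed \(n^k\) and simplifying to \(2n^{d-k-1} > 3^d - 1\). Your closing remark about the condition being satisfiable for all sufficiently large \(n\) when \(k < d-1\) matches the paper's own follow-up comment after the corollary.
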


It is therefore also possible to always find a chess space of large
enough dimension and size that for any $k$, \(n^k\) queens do not
suffice; alternatively, for any \(d > 2\), \(n^{d-2}\) queens do not
suffice for all but a finite number of $n$.

\section{Suggestions For Further Work}

Based on the work presented here, the authors see a likely future
result being the derivation of an exact expression for the number of
queens necessary to attack all positions in a $d$-dimensional board of
size $n$.  This may prove to be difficult, however, so an easier
effort should be one directed towards a better lower bound than
Theorem~\ref{leastqueens} provides.  Likewise, a non-trivial upper
bound (\(\frac{n^{d-1}}{d}\) being trivial) should also be obtainable.

Related to these, of course, are the more standard problems relating
to the enumeration of the possible solutions in case of a
$d$-dimensional board of size $n$.  Algorithms for placement of queens
in higher-dimensional spaces (a trivial problem when there are no more
queens than indicated by Theorem~\ref{leastqueens}) are also worth
investigating.  Considering the slew of applications of the standard
2-dimensional problem, it also remains to be seen what applications
can be made of the higher-dimensional analogue and results therein.

\section*{Acknowledgement}

The authors would like to thank K.~R.~Knopp for useful discussions
on this topic.

\end{document}